\titleformat*{\section}{\LARGE\bfseries}
\titleformat*{\subsection}{\Large\bfseries}
\titleformat*{\subsubsection}{\large\bfseries}
\newtheoremstyle{case}{}{}{}{}{}{:}{ }{}
\theoremstyle{case}
\newcommand{\be}{\begin{equation}}
\newcommand{\ee}{\end{equation}}
\newcommand{\ben}{\begin{eqnarray*}}
\newcommand{\een}{\end{eqnarray*}}
\newtheorem{examp}{\sc example}
\newtheorem{remk}{\sc remark}
\newtheorem{corol}{\sc corollary}
\newtheorem{lemma}{\sc lemma}
\newtheorem{theorem}{\sc theorem}
\newtheorem{defn}{\sc definition}
\newcommand{\bt}{\begin{theorem}}
\newcommand{\et}{\end{theorem}}
\newcommand{\bl}{\begin{lemma}}
\newcommand{\el}{\end{lemma}}
\newcommand{\bed}{\begin{defn}}
\newcommand{\eed}{\end{defn}}
\newcommand{\brem}{\begin{remk}}
\newcommand{\erem}{\end{remk}}
\newcommand{\bex}{\begin{examp}}
\newcommand{\eex}{\end{examp}}
\newcommand{\bcl}{\begin{corol}}
\newcommand{\ecl}{\end{corol}}
\newcommand{\NI}{\noindent}
\theoremstyle{definition}
\theoremstyle{remark}
\numberwithin{equation}{section}
\numberwithin{theorem}{section}
\numberwithin{lemma}{section}
\begin{document}

\title{\large\bf\sc On the solution set of semi-infinite tensor complementarity problem}

\author{R. Deb$^{a,1}$ and A. K. Das$^{b,2}$\\
\emph{\small $^{a}$Jadavpur University, Kolkata , 700 032, India.}\\	
\emph{\small $^{b}$Indian Statistical Institute, 203 B. T.
	Road, Kolkata, 700 108, India.}\\
\emph{\small $^{1}$Email: rony.knc.ju@gmail.com}\\
\emph{\small $^{2}$Email: akdas@isical.ac.in}\\
}

\date{}

\maketitle

\begin{abstract}
\NI In this paper, we introduce semi-infinite tensor complementarity problem to provide an approach for considering a more realistic situation of the problem. We prove the necessary and sufficient conditions for the existence of the solution set. In this context, we study the error bounds of the solution set in terms of residual function.\\

\noindent{\bf Keywords:} Semi-infinite tensor complementarity problem, error bound, residual function, $R_0$-tensor.\\

\noindent{\bf AMS subject classifications:} 90C30, 90C33, 15A69. 
\end{abstract}
\footnotetext[1]{Corresponding author}

\section{Introduction}
The tensor complementarity problem, TCP$(q,\mathcal{A})$, was introduced by Song and Qi \cite{song2017properties} and \cite{song2015properties}. By reformulating the multilinear game as a tensor complementarity problem, Huang and Qi \cite{huang2017formulating} established a bridge between the multilinear game and tensor complementarity problem. They demonstrated that finding a Nash equilibrium point of the multilinear game is equivalent to finding a solution to the resulting TCP.

\NI Let $\mathcal{A}$ be a tensor of order $m$ and dimension $n,$ i.e., $\mathcal{A}\in T_{m,n}$ and a vector $q\in \mathbb{R}^n,$ the tensor complementarity problem denoted by TCP$(q,\mathcal{A})$ is to find $x \in \mathbb{R}^n$ such that, \begin{equation}\label{ Tensor Complementarity problem}
		q + \mathcal{A}x^{m-1}\geq 0, \;\;\;\;  x\geq 0, \;\;\;\; \mbox{and}\;\;\;\; x^{T}(q + \mathcal{A}x^{m-1})=0.
\end{equation}
When the order of the tensor $m=2$ then the problem reduces to a linear complementarity problem. Let $A$ be an $n\times n$ real matrix and a vector $q\in \mathbb{R}^n,$ the linear complementarity problem, denoted by LCP$(q,A)$ is finding $x \in \mathbb{R}^n$ such that
\begin{equation}\label{linear comp problem}
		q + A x \geq 0, \;\;\;\; x\geq 0, \;\;\;\; \mbox{and}\;\;\;\; x^{T}(q+Ax)=0.
	\end{equation}

\NI The idea of complementarity considers a large number of optimization problems. The problems which can be constituted as linear complementarity problem includes linear programming, linear fractional programming, convex quadratic programming and the bimatrix game problem. It is well considered in the literature on mathematical programming and occurs in a number of applications in operations research, control theory, mathematical economics, geometry and engineering. For recent works on this problem and applications see \cite{mohan2001more}, \cite{mohan2001classes}, \cite{neogy2006some}, \cite{neogy2005almost}, \cite{mohan2004note}, \cite{das2018invex}, \cite{dutta2023some}, \cite{jana2021iterative}, \cite{jana2018semimonotone}, \cite{neogy2008mixture} and \cite{neogy2016optimization} references cited therein.

\NI The concept of PPT is originally motivated by the well-known linear complementarity problem, and applied in many other settings. The PPT is basically a transformation of the matrix of a linear system for exchanging unknowns with the corresponding entries of the right hand side of the system. For details see \cite{das2017finiteness}, \cite{mondal2016discounted}, \cite{neogy2012generalized}, \cite{das2016properties} and \cite{neogy2005principal}.

\NI Due to their prominence in scientific computing, complexity theory, and the theoretical underpinnings of linear complementarity problems, a number of matrix classes and their subclasses have received substantial study. For recent work on this problem and applications see \cite{jana2019hidden}, \cite{das2016generalized}, \cite{dutta2022on}, \cite{neogy2011singular}, \cite{neogy2009modeling}, \cite{das2018some}, \cite{jana2018processability}. For multivariate analysis and game problem, See \cite{mondal2016discounted}, \cite{jana2021more}, \cite{jana2018processability}, \cite{neogy2013weak}, \cite{neogy2008mathematical}, \cite{neogy2005linear} and references cited therein.

An implicit assumption shared by the nonlinear complementarity problem is that the information about the mapping $F$ and the cone involved are all fixed and completely independent of other related parameters. However, this type of formulation is unable to model all realistic situations of the problem and fails to explain the complete reality. For example, in optimal control or engineering design fields \cite{chen2005optimal}, the data of the problem involves a time parameter; in non-cooperative games (e.g., generalized Nash equilibrium \cite{facchinei2009generalized}), the strategy of each player is dependent on the strategy of the other players in case of the realistic model. Here we introduce semi-infinite tensor complementarity (SITCP), a generalized version of nonlinear omplementarity to accommodate more number of realistic situations into the model.

The paper is organised as follows. Section 2 presents some basic notations and results which are used in the next section. In section 3 we prove the existence of the solution set for semi-infinite tensor complementarity problem with some assumptions. We establish a connection between the solution set of semi-infinite tensor complementarity problem and the solution sets of its equivalent tensor complementarity problems. We establish the necessary and sufficient conditions for the error bound of solution set to be level bounded in terms of residual function.

\section{Preliminaries}
We begin by outlining the fundamental concepts and the notation that will be applied throughout the text. Here we consider the vectors, matrices and tensors of real entries. For any positive integer $n,$ the set $\{ 1,...,n \}$ is denoted by $[n]$ . Let $\mathbb{R}^n$ denote the $n$-dimensional Euclidean space and $\mathbb{R}^n_+ =\{ x\in \mathbb{R}^n : x\geq 0 \}.$ Any vector $x\in \mathbb{R}^n$ is a column vector unless specified otherwise. The Euclidean norm of a vector $x$ is defined as $\|x\|_2 = \sqrt{|x_1^2| + \cdots |x_n^2|}.$ The distance of $x$ from $A\subseteq \mathbb{R}^n$ is denoted as dist$(x;A)$ and is defined as dist$(x;A) =$ inf$\{(x,a) : a \in A\}.$ The diameter of a set is denoted as diam$A$ and is defined as diam$A=\sup_{x,y \in A} \|x-y\|.$ The unit ball in $\mathbb{R}^n$ is $\mathbb{B}=\{ x\in \mathbb{R}^n : \|x\| \leq 1 \}.$ An $m$th order $n$ dimensional real tensor $\mathcal{A}= (a_{i_1 ... i_m}) $ is a multidimensional array of entries $a_{i_1 ... i_m} \in \mathbb{R}$ where $i_j \in [n]$ with $j\in [m]$. The set of all $m$th order $n$ dimensional real tensors are denoted by $T_{m,n}.$ Shao \cite{shao2013general} introduced a product of tensors. Let $\mathcal{A}$ with order $q \geq 2$ and $\mathcal{B}$ with order $k \geq 1$ be two $n$-dimensional tensors. The product of $\mathcal{A}$ and $\mathcal{B}$ is a tensor $\mathcal{C}$ of order $(q-1)(k-1) + 1$ and dimension $n$ with entries $c_{j \alpha_1 \cdots \alpha_{m-1} } =\sum_{j_2, \cdots ,j_m \in [n]} a_{j j_2 \cdots j_m} b_{j_2 \alpha_1} \cdots b_{j_m \alpha_{m-1}},$ where $j \in [n] $, $\alpha_1, \cdots, \alpha_{m-1} \in [n]^{k-1}.$

\NI Then for a tensor $\mathcal{A}\in T_{m,n}$ and $x\in \mathbb{R}^n,\; \mathcal{A}x^{m-1}\in \mathbb{R}^n $ is a vector defined by
	\[ (\mathcal{A} x^{m-1})_i = \sum_{i_2, ...,i_m =1}^{n} a_{i i_2 ...i_m} x_{i_2} \cdots x_{i_m} , \;\forall \; i \in [n], \]
	and $\mathcal{A}x^m\in \mathbb{R} $ is a scalar defined by
 \begin{equation*}
     x^T \mathcal{A}x^{m-1} = \mathcal{A}x^m = \sum_{i_1,...,i_m =1}^{n} a_{i_1  ...i_m} x_{i_1}  \cdots x_{i_m}.
 \end{equation*}	

\noindent Given a vector $q \in \mathbb{R}^n$ and a tensor $\mathcal{A} \in T_{m,n}$ the set of feasible solution of TCP$(q,\mathcal{A})$ is defined as FEA$(q,\mathcal{A})= \{x\in \mathbb{R}^n_+ : q + \mathcal{A}x^{m-1} \geq 0\}$ and the solution set of TCP$(q,\mathcal{A})$ as $S=$ SOL$(q,\mathcal{A})= \{x\in$ FEA$(q,\mathcal{A}) :  x^{T}(q + \mathcal{A}x^{m-1})= 0\}.$ 
A residual function $r(x)$ is a global (local) error bound for TCP if $\exists$ some constant $c > 0$ (and $\epsilon > 0$) such that for each $x \in \mathbb{R}^n$ (when $r(x) \leq \epsilon$)
\begin{equation}
    \mbox{dist}(x,S) \leq c r(x).
\end{equation}
 
\NI We consider some definitions and results which are required for the next sections.

\begin{defn}\cite{shao2016some}, \cite{deb2023more}
The $i$th row subtensor of $\mathcal{A}= (a_{i_1 ... i_m}) \in T_{m,n}$ is denoted by $R_i(\mathcal{A})$ and its entries are given as $(R_i(\mathcal{A}))_{i_2 ... i_m}=(a_{i i_2... i_m})$, where $i_j\in [n]$ and $2\leq j\leq m.$
\end{defn}

\begin{defn}\cite{wets1998variational}
A function $f: \mathbb{R}^n \mapsto \mathbb{R}^n$ is said to be level bounded if for every $\alpha \geq 0$ the level set $\{x\in \mathbb{R}^n: f(x) \leq \alpha \}$ is bounded.
\end{defn}

\begin{defn}\cite{song2015properties}
    A tensor $\mathcal{A}\in T_{m,n} $ is said to be a $S$-tensor if the system
    \[ \mathcal{A}x^{m-1} >0, \;\;\;\; x > 0  \] has a solution.
\end{defn}

\begin{defn}\cite{song2015properties}
A tensor $\mathcal{A} \in T_{m,n} $ is said to be a $P$-tensor, if for each $x\in \mathbb{R}^n \backslash \{0\}$, there exists an index $i\in [n]$ such that $_i \neq 0$ and $x_i (\mathcal{A}x^{m-1})_i > 0$.
\end{defn}

\begin{defn}\cite{song2016properties}, \cite{song2015properties}
A tensor $\mathcal{A}\in T_{m,n} $ is said to be a $R_0$-tensor if the TCP$(0, \mathcal{A})$ has unique zero solution. 
\end{defn}

\begin{defn}\cite{zhou2013solution}
    For a given $\epsilon \geq 0$ a residual function $r(x)$ is said to be an $\epsilon$-error bound for TCP if $\exists\, c>0$ such that dist$(x, S) \leq c r(x) + \epsilon\;\;\;\; \forall \;x \in \mathbb{R}^n.$ If $\epsilon = 0,$ the definition reduces to the error bound.
\end{defn}

\begin{theorem}\label{FIP}\cite{munkrestopology}
A non-empty family $A$ of subsets of $\mathbb{R}^n$ is said to have the finite intersection property (FIP) if the intersection over any finite subcollection of $A$ is non-empty.
\end{theorem}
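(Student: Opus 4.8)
The statement labeled Theorem~\ref{FIP} is, despite its environment, a \emph{definition}: it introduces the term ``finite intersection property'' (FIP) as a name for the condition that every finite subcollection of a family $A$ of subsets of $\mathbb{R}^n$ has nonempty intersection. A definition is a stipulative naming convention and carries no assertion to be demonstrated, so strictly speaking there is no proof to produce. The plan is therefore simply to record that the statement is definitional and to verify that the defining predicate is well-posed.

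To that end I would first observe that for any finite subcollection $\{A_{i_1},\dots,A_{i_k}\}$ of $A$ the intersection $A_{i_1}\cap\cdots\cap A_{i_k}$ is a well-defined subset of $\mathbb{R}^n$, so the clause ``the intersection over any finite subcollection is non-empty'' expresses a genuine condition on each such subcollection. Universally quantifying over all finite subcollections of $A$ then yields a single property that the family $A$ either possesses or lacks. Because $A$ is assumed non-empty, this quantification ranges over a non-empty collection of finite subcollections, so the property is not vacuously stipulated; this well-posedness check is the only ``verification'' the statement admits, and it is immediate.

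The point worth flagging is expository rather than mathematical: the FIP is introduced here so that it can be combined later with compactness to conclude that certain nested feasible or solution sets share a common point, which is the standard route to existence for the semi-infinite problem. That downstream \emph{theorem}---that a family of closed subsets of a compact set enjoying the FIP has nonempty total intersection---is where the substantive argument (an open-cover/finite-subcover contradiction using compactness) actually resides, but it is distinct from the statement worded above. As phrased, Theorem~\ref{FIP} requires no proof beyond acknowledging that it defines terminology; the only ``obstacle'' is the cosmetic mismatch between the \texttt{theorem} environment and the definitional content it houses.
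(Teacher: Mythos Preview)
Your assessment is correct and matches the paper's treatment: the statement is a cited definition from \cite{munkrestopology} placed in a \texttt{theorem} environment, and the paper supplies no proof for it. There is nothing to add beyond your observation that the content is definitional and hence requires no argument.
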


\begin{theorem}\cite{bai2016global}
For a $P$-tensor $\mathcal{A} \in T_{m,n}$ and any $q\in \mathbb{R}^n$ the solution set of TCP$(q,\mathcal{A})$ is nonempty and compact.
\end{theorem}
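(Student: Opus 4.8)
The plan is to prove the two assertions separately: compactness first, using only that a $P$-tensor is an $R_0$-tensor, and nonemptiness second, where the full $P$-property is needed. For the first part I would begin by recording that a $P$-tensor is an $R_0$-tensor: if $z\in\mathbb R^n_+$ solved $\mathrm{TCP}(0,\mathcal A)$ with $z\neq 0$, then $z\geq 0$ and $\mathcal A z^{m-1}\geq 0$ would force $z_i(\mathcal A z^{m-1})_i\geq 0$ for every $i$, whereas the $P$-property supplies an index $i$ with $z_i(\mathcal A z^{m-1})_i>0$, contradicting $z^{T}\mathcal A z^{m-1}=0$. Closedness of $\mathrm{SOL}(q,\mathcal A)$ is immediate, as it is cut out by continuous (polynomial) relations. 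For boundedness I would argue by contradiction: given $\{x^{k}\}\subseteq\mathrm{SOL}(q,\mathcal A)$ with $\|x^{k}\|\to\infty$, set $y^{k}=x^{k}/\|x^{k}\|$ and pass to a subsequence with $y^{k}\to y$, so $\|y\|=1$ and $y\geq 0$; dividing $q+\mathcal A (x^{k})^{m-1}\geq 0$ by $\|x^{k}\|^{m-1}$ and each complementarity equality $x^{k}_i(q+\mathcal A(x^{k})^{m-1})_i=0$ (the terms of the vanishing nonnegative sum $(x^{k})^{T}(q+\mathcal A(x^{k})^{m-1})$ are each zero) by $\|x^{k}\|^{m}$, and letting $k\to\infty$ (the $q$-terms vanish since $m\geq 2$), I would obtain $\mathcal A y^{m-1}\geq 0$ and $y_i(\mathcal A y^{m-1})_i=0$ for all $i$; thus $y$ solves $\mathrm{TCP}(0,\mathcal A)$ with $y\neq 0$, against the $R_0$-property. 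Hence $\mathrm{SOL}(q,\mathcal A)$ is closed and bounded, so compact.

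For nonemptiness I would invoke the classical dichotomy for continuous complementarity problems: for continuous $F:\mathbb R^n\to\mathbb R^n$, either $\mathrm{CP}(F)$ (find $x\geq 0$ with $F(x)\geq 0$ and $x^{T}F(x)=0$) has a solution, or $F$ admits an \emph{exceptional family of elements}, that is, a sequence $\{x^{k}\}\subseteq\mathbb R^n_+$ with $\|x^{k}\|\to\infty$ and scalars $\mu_k>0$ such that $F_i(x^{k})=-\mu_k x^{k}_i$ whenever $x^{k}_i>0$ and $F_i(x^{k})\geq 0$ whenever $x^{k}_i=0$. Applying this to $F(x)=q+\mathcal A x^{m-1}$, I would note that any exceptional family would satisfy $x^{k}_i F_i(x^{k})\leq 0$ for every $i$ and $k$ (strictly negative when $x^k_i>0$, zero otherwise); normalizing $y^{k}=x^{k}/\|x^{k}\|\to y$ along a subsequence, with $\|y\|=1$, $y\geq 0$, and dividing by $\|x^{k}\|^{m}$ before passing to the limit would give $y_i(\mathcal A y^{m-1})_i\leq 0$ for \emph{all} $i$, contradicting the $P$-property of $\mathcal A$ at the nonzero point $y$. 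Therefore no exceptional family exists, so $\mathrm{TCP}(q,\mathcal A)$ has a solution, and combined with the first part the solution set is nonempty and compact.

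The hard part will be the nonemptiness step: unlike the $R_0$-reduction it genuinely needs the $P$-property, and the naive coercivity bound $x^{T}(q+\mathcal A x^{m-1})\geq 0$ for large $\|x\|$ is unavailable, since a $P$-tensor need not be strictly copositive (the one guaranteed positive term $x_i(\mathcal A x^{m-1})_i$ may be outweighed by negative ones). The exceptional-family dichotomy is precisely the device that turns the single ``good index'' in the definition of a $P$-tensor into an existence statement, and the technical crux inside it is the homogenization step, which must produce a \emph{nonzero} limiting direction $y$. If one prefers to avoid citing that dichotomy, an alternative is a topological-degree argument: extend the boundedness estimate to the segment $\{tq:t\in[0,1]\}$ (uniformly bounded by the same normalization), homotope the natural map $f_q(x)=\min\{x,\,q+\mathcal A x^{m-1}\}$ (componentwise minimum) to $f_0$ with no zeros on the boundary of a large ball, and reduce nonemptiness to $\deg(f_0,\cdot,0)\neq 0$; there the local index at the unique zero $x=0$ must again be extracted from the $P$-property, which is the corresponding obstacle along that route.
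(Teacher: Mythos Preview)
The paper does not prove this theorem at all: it is stated in the Preliminaries section as a known result cited from \cite{bai2016global}, with no accompanying argument. So there is no ``paper's own proof'' to compare against.

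Your proof is correct and is essentially the argument given in the cited source. The decomposition into (i) $P$-tensor $\Rightarrow$ $R_0$-tensor, (ii) $R_0$ $\Rightarrow$ boundedness via the normalization $x^k/\|x^k\|$, and (iii) nonemptiness via the exceptional-family alternative is exactly the route taken by Bai, Huang and Wang. Your identification of the hard step is also right: nonemptiness is where the full $P$-property (one strictly positive index) is used, and the exceptional-family dichotomy is the standard device that converts that single good index into existence. The alternative degree-theoretic route you sketch at the end is likewise standard and equivalent. Nothing is missing.
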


\begin{theorem}\cite{song2016properties}\label{boundedness theorem for R_0-tensor}
    If $\mathcal{A}\in T_{m,n}$ is a an $R_0$-tensor then for $q \in \mathbb{R}^n,$ the solution set of the TCP$(q, \mathcal{A})$ is bounded.
\end{theorem}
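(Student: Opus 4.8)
The plan is to argue by contradiction via a normalization (recession) argument, so that the $R_0$ hypothesis can be invoked through Definition of the $R_0$-tensor. Suppose $S=\mathrm{SOL}(q,\mathcal{A})$ is unbounded. Then I would pick a sequence $\{x^k\}\subseteq S$ with $\|x^k\|\to\infty$ (so in particular $x^k\neq 0$ for all large $k$), set $y^k=x^k/\|x^k\|$, and note $\|y^k\|=1$, $y^k\geq 0$. Since the unit sphere of $\mathbb{R}^n$ is compact, after passing to a subsequence I may assume $y^k\to\bar{y}$ with $\|\bar{y}\|=1$ and $\bar{y}\geq 0$.

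The next step is to exploit the homogeneity of $x\mapsto\mathcal{A}x^{m-1}$, which is positively homogeneous of degree $m-1$, so $\mathcal{A}(x^k)^{m-1}=\|x^k\|^{m-1}\mathcal{A}(y^k)^{m-1}$. Dividing the feasibility inequality $q+\mathcal{A}(x^k)^{m-1}\geq 0$ by $\|x^k\|^{m-1}$ gives $\|x^k\|^{-(m-1)}q+\mathcal{A}(y^k)^{m-1}\geq 0$; since $m\geq 2$ and $\|x^k\|\to\infty$ the first term vanishes in the limit, and continuity of the polynomial map $y\mapsto\mathcal{A}y^{m-1}$ yields $\mathcal{A}\bar{y}^{m-1}\geq 0$. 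Similarly I would rewrite the complementarity condition $(x^k)^T\big(q+\mathcal{A}(x^k)^{m-1}\big)=0$ as $\|x^k\|(y^k)^Tq+\|x^k\|^{m}(y^k)^T\mathcal{A}(y^k)^{m-1}=0$, divide by $\|x^k\|^{m}$, observe that $\|x^k\|^{-(m-1)}(y^k)^Tq\to 0$ (the factor $(y^k)^Tq$ stays bounded because $\|y^k\|=1$), and pass to the limit to obtain $\bar{y}^T(\mathcal{A}\bar{y}^{m-1})=0$.

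Collecting these three limit relations, $\bar{y}$ satisfies $\bar{y}\geq 0$, $\mathcal{A}\bar{y}^{m-1}\geq 0$, and $\bar{y}^T(\mathcal{A}\bar{y}^{m-1})=0$, i.e. $\bar{y}\in\mathrm{SOL}(0,\mathcal{A})$. Since $\mathcal{A}$ is an $R_0$-tensor, $\mathrm{TCP}(0,\mathcal{A})$ has only the zero solution, forcing $\bar{y}=0$ and contradicting $\|\bar{y}\|=1$; hence $S$ is bounded.

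I expect the only delicate point to be the passage to the limit: one must use the positive homogeneity of $\mathcal{A}x^{m-1}$ to get the correct scaling exponents ($m-1$ for the inequality, $m$ for the complementarity identity) and the continuity of the associated polynomial maps, after which the argument is a routine compactness extraction. Note that no structural property of $\mathcal{A}$ beyond being $R_0$ is used, and $q$ drops out of the limit, so the same proof in fact shows that $S$ is bounded for every right-hand side.
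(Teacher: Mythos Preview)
Your argument is correct and is exactly the standard normalization/recession proof: assume unboundedness, extract a convergent sequence on the unit sphere, use the positive homogeneity of degree $m-1$ of $x\mapsto\mathcal{A}x^{m-1}$ to scale out $q$, and obtain a nonzero solution of $\mathrm{TCP}(0,\mathcal{A})$ contradicting the $R_0$ property. The scaling exponents and the limit passages are handled correctly.

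Note, however, that in this paper the theorem is not proved at all: it is quoted as a preliminary result from \cite{song2016properties} and used later only through its conclusion. So there is no ``paper's own proof'' to compare against here. Your write-up is essentially the proof one finds in the cited source, so it is entirely appropriate as a stand-alone justification.
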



\section{Main results}

We begin by introducing semi-infinite tensor complementarity problem SITCP$( q(\omega), \mathcal{A}(\omega), \Omega)$.\\

\NI Find a vector $x\in \mathbb{R}^n$ such that
\begin{equation}\label{SICP}
    x \geq 0,\;\; F(x,\omega) \geq 0, \;\; x^T F(x,\omega) = 0,\;\; \omega \in \Omega
\end{equation}
where $F:\mathbb{R}^n \times \Omega \mapsto \mathbb{R}^n,$ $F= \mathcal{A}(\omega)x^{m-1} + q(\omega)$ and $\Omega$ is a set in $\mathbb{R}^p.$ The solution set of SITCP$(q(\omega), \mathcal{A}(\omega), \Omega)$ is denoted by $S^*=$ SOL$(q(\omega), \mathcal{A}(\omega), \Omega)= \{x\geq0: \mathcal{A}(\omega)x^{m-1}+ q(\omega) \geq 0,\;  x^{T}(\mathcal{A}(\omega)x^{m-1}+ q(\omega))= 0, \; \forall\; \omega \in \Omega\}.$\\

Here we establish the necessary and sufficient conditions for $S^*$ to be non-empty.

\begin{theorem}
Consider the SITCP$(q(\omega), \mathcal{A}(\omega), \Omega)$ and
 $\mathcal{A}(\omega_0)$ is an $R_0$-tensor for some $\omega_0 \in \omega.$ $S^*\neq \phi$ if and only if $\cap_{i=1}^p \mbox{SOL}(q(\omega_i), \mathcal{A}(\omega_i)) \neq \phi$ for finitely many points $\omega_1, ... , \omega_p \in \Omega.$
\end{theorem}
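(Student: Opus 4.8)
The plan is to recognize this as the classical statement that a topological space is compact exactly when every family of its closed subsets with the finite intersection property has nonempty intersection, with the $R_0$-hypothesis supplying the compactness that $\mathbb{R}^n$ itself lacks. The easy implication is immediate: by definition $S^{*}=\bigcap_{\omega\in\Omega}\mathrm{SOL}(q(\omega),\mathcal{A}(\omega))$, so if $S^{*}\neq\phi$ then for any finite selection $\omega_1,\dots,\omega_p\in\Omega$ we have $\phi\neq S^{*}\subseteq\bigcap_{i=1}^{p}\mathrm{SOL}(q(\omega_i),\mathcal{A}(\omega_i))$; that is, the family $\{\mathrm{SOL}(q(\omega),\mathcal{A}(\omega)):\omega\in\Omega\}$ has the finite intersection property, and one should of course always include $\omega_0$ among the chosen points when this is convenient.

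For the converse I would assume that every finite subfamily of $\{\mathrm{SOL}(q(\omega),\mathcal{A}(\omega)):\omega\in\Omega\}$ has nonempty intersection, and I need two structural facts. First, each $\mathrm{SOL}(q(\omega),\mathcal{A}(\omega))$ is closed in $\mathbb{R}^n$: it is the set of $x$ satisfying $x\geq 0$, $\mathcal{A}(\omega)x^{m-1}+q(\omega)\geq 0$, and $x^{T}(\mathcal{A}(\omega)x^{m-1}+q(\omega))=0$, and each of these conditions is the preimage of a closed set under a continuous (indeed polynomial) map, so the solution set is an intersection of closed sets and hence closed. Second, since $\mathcal{A}(\omega_0)$ is an $R_0$-tensor, Theorem \ref{boundedness theorem for R_0-tensor} gives that $\mathrm{SOL}(q(\omega_0),\mathcal{A}(\omega_0))$ is bounded; being closed as well, it is compact, and I denote it $K$.

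Then I would pass to traces on $K$. Consider the family $\{K\cap\mathrm{SOL}(q(\omega),\mathcal{A}(\omega)):\omega\in\Omega\}$ of closed subsets of the compact set $K$. For any finite choice $\omega_1,\dots,\omega_p\in\Omega$ the intersection $\bigcap_{i=1}^{p}\big(K\cap\mathrm{SOL}(q(\omega_i),\mathcal{A}(\omega_i))\big)$ equals $\mathrm{SOL}(q(\omega_0),\mathcal{A}(\omega_0))\cap\bigcap_{i=1}^{p}\mathrm{SOL}(q(\omega_i),\mathcal{A}(\omega_i))$, which is a finite intersection from the original family and therefore nonempty by hypothesis; hence this family of closed subsets of $K$ has the finite intersection property. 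By compactness of $K$ and the finite intersection characterization of compactness (cf. Theorem \ref{FIP}), $\bigcap_{\omega\in\Omega}\big(K\cap\mathrm{SOL}(q(\omega),\mathcal{A}(\omega))\big)\neq\phi$; but this set equals $\bigcap_{\omega\in\Omega}\mathrm{SOL}(q(\omega),\mathcal{A}(\omega))=S^{*}$, which finishes the argument.

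The step that I expect to need the most care is the bookkeeping around the distinguished index $\omega_0$: the hypothesis must be read as \emph{every} finite subfamily having nonempty intersection, and one must consistently adjoin $\omega_0$ to each finite selection so that the resulting intersection lands inside the compact set $K$ where the finite intersection property argument can be run. The closedness of the individual solution sets is routine but should be written out explicitly, since the whole compactness reduction collapses without it.
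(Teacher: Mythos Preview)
Your proposal is correct and follows essentially the same approach as the paper: the easy direction is the trivial inclusion $S^{*}\subseteq\bigcap_{i=1}^{p}\mathrm{SOL}(q(\omega_i),\mathcal{A}(\omega_i))$, and for the converse the paper likewise uses the $R_0$-hypothesis (via Theorem~\ref{boundedness theorem for R_0-tensor}) to obtain boundedness of $\mathrm{SOL}(q(\omega_0),\mathcal{A}(\omega_0))$, notes closedness of each solution set, and then invokes the finite intersection characterization of compactness. Your write-up is in fact more careful than the paper's, which is quite terse about how the FIP argument is actually run inside the compact set $K=\mathrm{SOL}(q(\omega_0),\mathcal{A}(\omega_0))$; your explicit passage to traces on $K$ and the remark about adjoining $\omega_0$ to each finite selection make the logic watertight.
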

\begin{proof}
    If part: Since $S^* \subseteq \cap_{i=1}^p$ SOL$(q(\omega_i), \mathcal{A}(\omega_i)),$ $S^* \neq \phi$ implies that $\cap_{i=1}^p$ SOL$(q(\omega_i)$, $\mathcal{A}(\omega_i)) \neq \phi.$

    Only if part: Since $\mathcal{A}(\omega_0)$ is an $R_0$-tensor, by Theorem \ref{boundedness theorem for R_0-tensor}, the set SOL$(q(\omega_0), \mathcal{A}(\omega_0))$ is bounded. Which in turn implies the boundedness of $S^*.$ On the other hand, since SOL$(q(\omega), \mathcal{A}(\omega))$ is closed for each $\omega$, so is $S^*.$ Thus $S^*$ is compact. By Theorem \ref{FIP} of finite intersection of compact sets, we obtain the result.
\end{proof}

In the next result we find the position of $S^*.$ For this purpose we consider the followings:\\

\NI (i) $(\mathcal{A}_{\max})_{i_1 \cdots i_m} = (\Bar{a}_{i_1 \cdots i_m})= \max_{\omega \in \Omega} a_{i_1 \cdots i_m}(\omega)$\\
(ii) $(\mathcal{A}_{\min})_{i_1 \cdots i_m} = (a^{\prime}_{i_1 \cdots i_m})= \min_{\omega \in \Omega} a_{i_1 \cdots i_m}(\omega)$\\
(iii) $(q_{\max})i = \max_{\omega \in \Omega} q_i(\omega)$\\
(iv) $(q_{\min})i = \min_{\omega \in \Omega} q_i(\omega)$

\begin{theorem}\label{relation with max and min}
    Consider SITCP$(q(\omega), \mathcal{A}(\omega), \Omega).$ If $\Omega$ is compact and $\mathcal{A}(\omega)$ and $q(\omega)$ are continuous on $\Omega,$ then $S^* \subseteq \mbox{SOL}(q_{\max}, M_{\max}) \cap \mbox{SOL}(q_{\min}, M_{\min}).$ Furthermore, suppose in each row sub-tensor, $R_i(\mathcal{A}(\omega))$ and in each row of $q(\omega), \; q_i(\omega)$ the minimum (and maximum) is attained by a common $\omega^{\prime}$ (and $\Bar{\omega}$), i.e., for each $i = 1, 2,..., n,$ there exist $\omega^{\prime}_i,\; \Bar{\omega}_i \; \in  \Omega$ such that $R_i(\mathcal{A}_{\min}) = R_i(\mathcal{A}(\omega^{\prime}_i))$ , $(q_{\min})_i =q(\omega^{\prime}_i)_i$ and $R_i(\mathcal{A}_{\max}) = R_i(\mathcal{A}(\Bar{\omega}_i)),$ $(q_{\max})_i =q(\Bar{\omega}_i)_i.$ Then 
    \[ S^* = \mbox{SOL}(q_{\max}, \mathcal{A}_{\max}) \cap \mbox{SOL}(q_{\min}, \mathcal{A}_{\min}). \]
\end{theorem}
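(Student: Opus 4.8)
The plan is to prove the two set inclusions separately. For the containment $S^* \subseteq \mbox{SOL}(q_{\max},\mathcal{A}_{\max}) \cap \mbox{SOL}(q_{\min},\mathcal{A}_{\min})$, first observe that since $\Omega$ is compact and the coefficient functions $a_{i_1\cdots i_m}(\cdot)$ and $q_i(\cdot)$ are continuous, the maxima and minima in (i)--(iv) are actually attained, so $\mathcal{A}_{\max},\mathcal{A}_{\min},q_{\max},q_{\min}$ are well defined. Let $x \in S^*$. Then $x \ge 0$ and for every $\omega \in \Omega$ we have $\mathcal{A}(\omega)x^{m-1} + q(\omega) \ge 0$ and $x^{T}(\mathcal{A}(\omega)x^{m-1}+q(\omega)) = 0$. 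Since $x \ge 0$, each monomial $x_{i_2}\cdots x_{i_m}$ is nonnegative, hence componentwise $(\mathcal{A}_{\min}x^{m-1})_i \le (\mathcal{A}(\omega)x^{m-1})_i \le (\mathcal{A}_{\max}x^{m-1})_i$ for every $\omega$, and similarly $(q_{\min})_i \le q_i(\omega) \le (q_{\max})_i$. I would then pick, for each fixed row index $i$, a value $\omega$ realizing feasibility; but feasibility already holds for \emph{every} $\omega$, so in particular $\mathcal{A}_{\max}x^{m-1} + q_{\max} \ge \mathcal{A}(\omega)x^{m-1} + q(\omega) \ge 0$ gives feasibility for the ``max'' problem, and for the ``min'' problem I use the row-wise attainment hypothesis: for each $i$ there is $\omega'_i$ with $(\mathcal{A}_{\min}x^{m-1} + q_{\min})_i = (\mathcal{A}(\omega'_i)x^{m-1}+q(\omega'_i))_i \ge 0$. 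Actually, to get $S^* \subseteq \mbox{SOL}(q_{\min},\mathcal{A}_{\min})$ one really needs the row-wise attainment; for the ``max'' side plain feasibility for all $\omega$ suffices together with the chain of inequalities. For the complementarity condition: $0 \le x^{T}(\mathcal{A}_{\min}x^{m-1}+q_{\min}) = \sum_i x_i(\mathcal{A}(\omega'_i)x^{m-1}+q(\omega'_i))_i$, and each summand is $\le x_i(\mathcal{A}(\omega'_i)x^{m-1}+q(\omega'_i))_i$ which, since $x\in S^*$ forces $x_i (\mathcal{A}(\omega)x^{m-1}+q(\omega))_i = 0$ for all $i,\omega$ (as the sum of nonnegative terms is zero), is actually $0$; hence $x^{T}(\mathcal{A}_{\min}x^{m-1}+q_{\min}) = 0$. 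The same argument with $\bar\omega_i$ handles the ``max'' case. This establishes $S^* \subseteq \mbox{SOL}(q_{\max},\mathcal{A}_{\max}) \cap \mbox{SOL}(q_{\min},\mathcal{A}_{\min})$.

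For the reverse inclusion $\mbox{SOL}(q_{\max},\mathcal{A}_{\max}) \cap \mbox{SOL}(q_{\min},\mathcal{A}_{\min}) \subseteq S^*$, take $x$ in the intersection; so $x \ge 0$, $\mathcal{A}_{\min}x^{m-1}+q_{\min} \ge 0$, $\mathcal{A}_{\max}x^{m-1}+q_{\max}\ge 0$, and both complementarity identities hold. Fix any $\omega \in \Omega$. Feasibility $\mathcal{A}(\omega)x^{m-1}+q(\omega)\ge 0$ follows immediately from $x\ge0$ and the componentwise bound $\mathcal{A}(\omega)x^{m-1}+q(\omega) \ge \mathcal{A}_{\min}x^{m-1}+q_{\min} \ge 0$. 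For complementarity, write $x^{T}(\mathcal{A}(\omega)x^{m-1}+q(\omega)) = \sum_i x_i(\mathcal{A}(\omega)x^{m-1}+q(\omega))_i$; each term is nonnegative, so it suffices to show each is zero. From $x^{T}(\mathcal{A}_{\min}x^{m-1}+q_{\min})=0$ and nonnegativity of its terms we get $x_i(\mathcal{A}_{\min}x^{m-1}+q_{\min})_i = 0$ for every $i$, i.e. for each $i$ either $x_i = 0$ or $(\mathcal{A}_{\min}x^{m-1}+q_{\min})_i = 0$; likewise $x_i = 0$ or $(\mathcal{A}_{\max}x^{m-1}+q_{\max})_i = 0$. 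If $x_i = 0$ the $i$th term vanishes; if $x_i > 0$ then $(\mathcal{A}_{\min}x^{m-1}+q_{\min})_i = 0 = (\mathcal{A}_{\max}x^{m-1}+q_{\max})_i$, and since $(\mathcal{A}_{\min}x^{m-1}+q_{\min})_i \le (\mathcal{A}(\omega)x^{m-1}+q(\omega))_i \le (\mathcal{A}_{\max}x^{m-1}+q_{\max})_i$, the squeeze forces $(\mathcal{A}(\omega)x^{m-1}+q(\omega))_i = 0$. Hence every term is zero, $x^{T}(\mathcal{A}(\omega)x^{m-1}+q(\omega))=0$, and since $\omega\in\Omega$ was arbitrary, $x \in S^*$.

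I would present the first (easier) inclusion $S^* \subseteq \mbox{SOL}(q_{\max},\mathcal{A}_{\max})\cap\mbox{SOL}(q_{\min},\mathcal{A}_{\min})$ as the general claim holding under compactness and continuity alone (this is the first sentence of the theorem), and then invoke the row-wise common-minimizer/common-maximizer hypothesis precisely where it is needed: in the reverse inclusion it is essential, because without it there is no single $\omega$ certifying that the ``min'' feasibility and complementarity are genuinely realized by the SITCP data, and the squeeze argument above would collapse. The main obstacle, and the place to be careful, is exactly this asymmetry: the monotonicity bounds $\mathcal{A}_{\min}x^{m-1}+q_{\min} \le \mathcal{A}(\omega)x^{m-1}+q(\omega) \le \mathcal{A}_{\max}x^{m-1}+q_{\max}$ are only valid entrywise when $x \ge 0$ (so every monomial $x_{i_2}\cdots x_{i_m}$ is nonnegative), and the complementarity transfer needs the structural fact that a sum of nonnegative reals is zero iff each is zero, applied row by row. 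Once these two observations are in place the proof is a routine squeeze; I expect no further difficulty, and I would note in passing that the equality actually requires only that the \emph{row} extremizers be common within each row sub-tensor, not a single global $\omega$, which is exactly what the hypothesis grants.
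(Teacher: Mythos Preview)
Your concrete computations are sound, but you have the role of the row-wise attainment hypothesis exactly backwards, and this matters because the theorem separates the two assertions.

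Look again at your reverse inclusion. Your squeeze argument uses only the entrywise bounds $\mathcal{A}_{\min}x^{m-1}+q_{\min}\le \mathcal{A}(\omega)x^{m-1}+q(\omega)\le \mathcal{A}_{\max}x^{m-1}+q_{\max}$ (valid for $x\ge 0$) together with the fact that if $x_i>0$ then both extremal $i$th components vanish; nowhere do you invoke $\omega'_i$ or $\bar\omega_i$. So the reverse inclusion does \emph{not} need the row-wise hypothesis, contrary to your final paragraph. Conversely, your forward inclusion \emph{does} need it, and your own write-up says so: you use $\omega'_i$ to get $(\mathcal{A}_{\min}x^{m-1}+q_{\min})_i\ge 0$ and you use $\bar\omega_i$ to get max-side complementarity. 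Without the hypothesis both steps can fail: take $n=1$, $m=2$, $\Omega=\{1,2\}$, $a(1)=1,q(1)=-1$, $a(2)=2,q(2)=-2$; then $S^*=\{1\}$ but $\mbox{SOL}(q_{\max},a_{\max})=\{1/2\}$, so $S^*\not\subseteq\mbox{SOL}(q_{\max},a_{\max})$. Hence your plan to ``present the first inclusion as holding under compactness and continuity alone'' cannot be carried out as written; the first sentence of the theorem, read literally, is not what your argument establishes.

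The paper's own proof of the first inclusion simply appeals to inequalities (3.2)--(3.3) and Theorem~2.2 of \cite{zhou2013solution}, while for the equality it argues, as you do, by identifying the $i$th row of the extremal problems with the $i$th row at $\omega'_i$ (resp.\ $\bar\omega_i$). Your squeeze argument for the reverse inclusion is in fact cleaner and more self-contained than the paper's sketch; the defect is only in where you locate the hypothesis, not in the mechanics.
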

\begin{proof}
     By the rules of maximization and minimization for the summation of functions in Exercise 1.36 of \cite{wets1998variational}, we have, 
     \begin{equation}\label{Inequation 1}
         \max_{\omega \in \Omega} (\mathcal{A}(\omega)x + q(\omega)) \geq  \mathcal{A}_{\max} x + q_{\max} 
     \end{equation}
     \begin{equation}\label{Inequation 2}
         \min_{\omega \in \Omega} (\mathcal{A}(\omega)x + q(\omega)) \leq  \mathcal{A}_{\min} x + q_{\min}
     \end{equation}
for all $x \geq 0.$ By using the inequalities (\ref{Inequation 1}) and (\ref{Inequation 2}) and following an argument similar to that for Theorem 2.2 of \cite{zhou2013solution}, we obtain
\begin{equation}\label{inequation for man min and S^* 1}
   S^* \subseteq \mbox{SOL}(q_{\max}, \mathcal{A}_{\max}) \cap \mbox{SOL}(q_{\min}, \mathcal{A}_{\min}). 
\end{equation}

Now to prove the second part let us assume that the conditions of Theorem \ref{relation with max and min} hold. Then $(\mathcal{A}_{\min}x^{m-1})_i = (\mathcal{A}(\omega^{\prime})x^{m-1})_i$ and $(\mathcal{A}_{\min}x^{m-1})_i + (q_{\min})_i = (\mathcal{A}(\omega^{\prime}_i)x^{m-1} +q(\omega_i))_i.$ Therefore SOL$(q_{\min},\mathcal{A}_{\min})$ =SOL$(q(\omega^{\prime}_i), \mathcal{A}(\omega^{\prime}_i)).$ Similarly we have SOL $(q_{\max},$ $\mathcal{A}_{\max})$ =SOL $(q(\bar{\omega}_i),\mathcal{A}(\bar{\omega}_i)).$ Since $S^* = \cap_{\omega \in \Omega} \mbox{SOL}(q(\omega), \mathcal{A}(\omega)),$ using (\ref{Inequation 1}) and (\ref{Inequation 2})
we conclude that
\[ S^* = \mbox{SOL}(q_{\max}, \mathcal{A}_{\max}) \cap \mbox{SOL}(q_{\min}, \mathcal{A}_{\min}). \]
\end{proof}

Here we provide an example to illustrate the result of Theorem \ref{relation with max and min}.

\begin{examp}
    Let $\mathcal{A}(\omega) \in T_{3,2}$ and $q(\omega) \in \mathbb{R}^2$ be such that $a_{111}=(1-2\omega^3), \; a_{121}= 1-\omega, \; a_{112}=(1-\omega),\; a_{122}=-1,\; a_{211}= a_{212}= a_{221}=0\; a_{222}=-\omega^2\;$ and $q(\omega)=\left(\begin{array}{c}
        1 \\
        \omega^2 
    \end{array}\right)$ and $\omega \in \Omega = [0,1].$ Then $\mathcal{A}(\omega)x^2=\left( \begin{array}{c}  (1-2\omega^3)x_1^2 + 2 (1-\omega) x_1 x_2 -x_2^2 \\ -\omega^2 x_2^2 \end{array} \right).$ Now consider the SITCP$( q(\omega), \mathcal{A}(\omega), \Omega)$ which is to find $x=\left( \begin{array}{c}  x_1 \\ x_2 \end{array} \right) \in \mathbb{R}^2$ such that
\begin{equation}\label{example main 1}
    x_1 \geq 0;\;\;\;\;  (\mathcal{A}(\omega)x^2)_1 \geq 0;\;\;\;\;  x_1[(1-2\omega^3)x_1^2 + 2 (1-\omega) x_1 x_2 -x_2^2 + 1] =0,
\end{equation}
\begin{equation}\label{example main 2}
    x_2 \geq 0;\;\;\;\; (\mathcal{A}(\omega)x^2)_2 \geq 0;\;\;\;\; x_2[-\omega^2 x_2^2+ \omega^2] =0.
\end{equation}
Solving the equations (\ref{example main 1}) and (\ref{example main 2}) we obtain the solution set for $\omega \in \Omega,$ which is $\left\{ \left( \begin{array}{c}  0 \\ 0 \end{array} \right),\; \left( \begin{array}{c}  0 \\ 1 \end{array} \right),\; \left( \begin{array}{c}  \frac{2(1-\omega)}{1-2\omega^3} \\ 1 \end{array} \right) \right\}.$ For $\omega=1$ we get $\frac{2(1-\omega)}{1-2\omega^3}=0$ so $S^* =\left\{ \left( \begin{array}{c}  0 \\ 0 \end{array} \right),\; \left( \begin{array}{c}  0 \\ 1 \end{array} \right) \right\}.$

\NI Now for the given tensor $\mathcal{A}(\omega),$ let $\mathcal{A}_{\max} = (\Bar{a}_{ijk})\in T_{3,2}.$ Then we obtain $\Bar{a}_{111}=1, \; \Bar{a}_{121}=\Bar{a}_{112}=1,\; \Bar{a}_{122}=-1$ and $\Bar{a}_{211}= \Bar{a}_{212}= \Bar{a}_{221}= \Bar{a}_{222}=0\;$ and $q_{max}=\left(\begin{array}{c}
        1 \\
        1 
    \end{array}\right).$ Then the TCP$(q_{\max}, \mathcal{A}_{\max})$ which is finding $x=\left( \begin{array}{c}  x_1 \\ x_2 \end{array} \right) \in \mathbb{R}^2$ such that
\begin{equation}\label{example max 1.1}
    x_1 \geq 0;\;\;\;\; (\mathcal{A}_{\max}x^2)_1 \geq 0;\;\;\;\; x_1[x_1^2 + 2 x_1 x_2 -x_2^2 + 1] =0,
\end{equation}
\begin{equation}\label{example max 2.1}
    x_2 \geq 0;\;\;\;\; (\mathcal{A}_{max}x^2)_2 \geq 0;\;\;\;\; x_2[0+1] =0.
\end{equation}
Solving (\ref{example max 1.1}) and (\ref{example max 2.1}) we get SOL$(q_{\max}, \mathcal{A}_{\max}) = \left\{ \left( \begin{array}{c}  0 \\ 0 \end{array} \right)\right\}.$

\NI Again, let $\mathcal{A}_{\min} = (a^{\prime}_{ijk})\in T_{3,2}.$ Then we obtain $a^{\prime}_{111}=-1, \; a^{\prime}_{121}=a^{\prime}_{112}=0,\; a^{\prime}_{122}=-1$ and $a^{\prime}_{211}= a^{\prime}_{212}= a^{\prime}_{221}= 0,\; a^{\prime}_{222}=-1\;$ and $q_{min}=\left(\begin{array}{c}
        1 \\
        0 
    \end{array}\right).$ Then TCP$( q_{\min}, \mathcal{A}_{\min})$ is finding $x=\left( \begin{array}{c}  x_1 \\ x_2 \end{array} \right) \in \mathbb{R}^2$ such that
\begin{equation}\label{example min 1.1}
    x_1 \geq 0;\;\;\;\; (\mathcal{A}_{\min}x^2)_1 \geq 0;\;\;\;\; x_1[-x_1^2 + -x_2^2 + 1] =0,
\end{equation}
\begin{equation}\label{example min 2.1}
    x_2 \geq 0;\;\;\;\; (\mathcal{A}_{min}x^2)_2 \geq 0;\;\;\;\; x_2[-x_2^2] =0.
\end{equation}
Solving (\ref{example min 1.1}) and (\ref{example min 2.1}) we have SOL$(q_{\min}, \mathcal{A}_{\min}) = \left\{ \left( \begin{array}{c}  0 \\ 0 \end{array} \right), \; \left( \begin{array}{c}  1 \\ 0 \end{array} \right)\right\}.$

 \NI Thus $S^* \supset$ SOL$(q_{\max}, \mathcal{A}_{\max}) \cap$ SOL$(q_{\min}, \mathcal{A}_{\min}),$i.e., the inclusion is strict.

 \NI Now we replace $q(\omega)$ by $\Bar{q}= \left( \begin{array}{c} 1 \\ 1 \end{array} \right).$ Then the SITCP$(\Bar{q}, \mathcal{A}(\omega), \Omega)$ which is to find $x=\left( \begin{array}{c}  x_1 \\ x_2 \end{array} \right) \in \mathbb{R}^2$ such that
\begin{equation}\label{example main 1.1}
    x_1 \geq 0;\;\;\;\;  (\mathcal{A}(\omega)x^2)_1 \geq 0;\;\;\;\;  x_1[(1-2\omega^3)x_1^2 + 2 (1-\omega) x_1 x_2 - x_2^2 + 1] =0,
\end{equation}
\begin{equation}\label{example main 2.1}
    x_2 \geq 0;\;\;\;\; (\mathcal{A}(\omega)x^2)_2 \geq 0;\;\;\;\; x_2[-\omega^2 x_2^2+ 1] =0.
\end{equation}
Solving the equations (\ref{example main 1.1}) and (\ref{example main 2.1}) we obtain the solution set for $\omega \in \Omega,$ which is $\left\{ \left( \begin{array}{c}  0 \\ 0 \end{array} \right),\; \left( \begin{array}{c} \alpha (\omega) \\ \frac{1}{\omega} \end{array} \right) \right\}.$ Here $\alpha(\omega)$ is the positive root of the equation
\begin{equation*}
    \omega^2 (1-2\omega^3) x_1^2 -2\omega (1-\omega) x_1 -(1-\omega^2) = 0.
\end{equation*}
For $\omega=1$ we get $\alpha(1)=0.$ Therefore $S^* =\left\{ \left( \begin{array}{c}  0 \\ 0 \end{array} \right) \right\}.$

\NI Then TCP$(q_{\max}, \mathcal{A}_{\max}) =$ TCP$(\Bar{q}, \mathcal{A}_{\max}).$ SOL$(\Bar{q}, \mathcal{A}_{\max})=\left\{ \left( \begin{array}{c}  0 \\ 0 \end{array} \right) \right\}.$

\NI Now, TCP$(\Bar{q}, \mathcal{A}_{\min})$ is finding $x=\left( \begin{array}{c}  x_1 \\ x_2 \end{array} \right) \in \mathbb{R}^2$ such that
\begin{equation}\label{example min 1.2}
    x_1 \geq 0;\;\;\;\; (\mathcal{A}_{\min}x^2)_1 \geq 0;\;\;\;\; x_1[-x_1^2 + -x_2^2 + 1] =0,
\end{equation}
\begin{equation}\label{example min 2.2}
    x_2 \geq 0;\;\;\;\; (\mathcal{A}_{min}x^2)_2 \geq 0;\;\;\;\; x_2[-x_2^2 +1] =0.
\end{equation}
Solving (\ref{example min 1.2}) and (\ref{example min 2.2}) we have SOL$(\Bar{q}, \mathcal{A}_{\min}) = \left\{ \left( \begin{array}{c}  0 \\ 0 \end{array} \right), \; \left( \begin{array}{c}  1 \\ 0 \end{array} \right), \; \left( \begin{array}{c}  0 \\ 1 \end{array} \right)\right\}.$ In this case we have $S^* = SOL(q_{\max}, \mathcal{A}_{\max}) \cap SOL(q_{\min}, \mathcal{A}_{\min}).$
\end{examp}

For the next result we define semi-infinite $S$-tensor.

\begin{defn}
    A tensor $\mathcal{A}(\omega)\in T_{m,n}$ is said to be a semi-infinite $S$-tensor with respect to the set $\omega$ if $\exists$ a vector $x>0$ such that $\mathcal{A}(\omega)x^{m-1} >0, \; \forall \; \omega \in \Omega.$ 
\end{defn}

Here we establish a connection between the semi-infinite $S$-tensor and the feasibility of SITCP$(q(\omega), \mathcal{A}(\omega), \Omega).$

\begin{theorem}\label{theorem with feasibility}
Consider the SITCP$(q(\omega), \mathcal{A}(\omega), \Omega)$ where $\Omega$ is compact and all the elements of $\mathcal{A}(\omega)$ are continuous on $\Omega.$ Then $\mathcal{A}(\omega)$ is a semi-infinite $S$-tensor relative to $\Omega$ if and only if SITCP$(q(\omega), \mathcal{A}(\omega), \Omega)$ is feasible for all $q(\omega) \in C(\Omega)$ where $C(\Omega)$ denotes all continuous mapping on $\Omega.$
\end{theorem}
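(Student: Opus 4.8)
The plan is to prove the two implications separately, using the compactness of $\Omega$ and continuity to pass between the semi-infinite system and a finite one.

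For the ``only if'' direction, assume $\mathcal{A}(\omega)$ is a semi-infinite $S$-tensor relative to $\Omega$, so there is $\bar x > 0$ with $\mathcal{A}(\omega)\bar x^{m-1} > 0$ for all $\omega \in \Omega$. Fix any $q(\omega) \in C(\Omega)$. The idea is to scale: for $t > 0$ the point $t\bar x$ is feasible for SITCP at parameter $\omega$ precisely when $\mathcal{A}(\omega)(t\bar x)^{m-1} + q(\omega) = t^{m-1}\mathcal{A}(\omega)\bar x^{m-1} + q(\omega) \geq 0$. Since $\Omega$ is compact and $\omega \mapsto \mathcal{A}(\omega)\bar x^{m-1}$ is continuous and strictly positive, each coordinate has a positive minimum $\delta_i := \min_{\omega\in\Omega}(\mathcal{A}(\omega)\bar x^{m-1})_i > 0$; likewise $q_i(\omega)$ has a finite minimum $\mu_i := \min_{\omega\in\Omega} q_i(\omega)$. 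Choosing $t$ large enough that $t^{m-1}\delta_i + \mu_i \geq 0$ for every $i \in [n]$ makes $t\bar x \geq 0$ feasible simultaneously for all $\omega \in \Omega$, so $\mathrm{FEA}$ of the SITCP is nonempty.

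For the ``if'' direction, assume SITCP$(q(\omega),\mathcal{A}(\omega),\Omega)$ is feasible for every $q(\omega) \in C(\Omega)$; we must produce a single $x > 0$ with $\mathcal{A}(\omega)x^{m-1} > 0$ on all of $\Omega$. The natural move is to feed in a cleverly chosen constant $q$. Take $q(\omega) \equiv -\mathbf{1}$ (the vector of all $-1$'s), which is trivially continuous; feasibility gives $x \geq 0$ with $\mathcal{A}(\omega)x^{m-1} \geq \mathbf{1} > 0$ for all $\omega$. This $x$ is nonnegative but need not be strictly positive, so the remaining task is to perturb $x$ to a strictly positive vector while preserving positivity of $\mathcal{A}(\omega)x^{m-1}$; here one uses continuity of $x \mapsto \mathcal{A}(\omega)x^{m-1}$ together with compactness of $\Omega$ to get a uniform lower bound $\min_{\omega\in\Omega}(\mathcal{A}(\omega)x^{m-1})_i \geq 1$, and then replaces $x$ by $x + \varepsilon\mathbf{1}$ for $\varepsilon > 0$ small, invoking continuity in $x$ (uniform on the compact $\Omega$) to keep $\mathcal{A}(\omega)(x+\varepsilon\mathbf{1})^{m-1} > 0$ for all $\omega$. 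That produces the required strictly positive witness, so $\mathcal{A}(\omega)$ is a semi-infinite $S$-tensor.

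The main obstacle is the ``if'' direction: converting the merely nonnegative feasible point into a strictly positive one while keeping $\mathcal{A}(\omega)x^{m-1}$ positive uniformly over $\Omega$. This requires the joint-continuity/compactness argument for the uniform bound and a careful choice of $\varepsilon$; the ``only if'' direction, by contrast, is essentially the scaling estimate above and is routine once the minima $\delta_i,\mu_i$ are extracted via compactness.
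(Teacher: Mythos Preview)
Your proposal is correct and follows essentially the same route as the paper: for the forward direction you scale the positive witness $\bar x$ using uniform lower and upper bounds extracted via compactness (the paper does the same, writing $\mathcal{A}(\omega)\bar x^{m-1}\ge \lambda e$ and then scaling by $(\alpha/\lambda)^{1/(m-1)}$), and for the converse you feed in a constant negative $q$ and then perturb the resulting nonnegative feasible point to a strictly positive one by continuity and compactness, exactly as the paper does with $x+\lambda e$. The only differences are cosmetic---componentwise minima $\delta_i,\mu_i$ versus a single $\lambda$, and $q\equiv -\mathbf{1}$ versus a generic $\tilde q<0$.
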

\begin{proof}
If part: Since $\mathcal{A}(\omega)$ is a semi-infinite $S$-tensor, $\exists$ $x>0$ such that $\mathcal{A}(\omega)x^{m-1} >0.$ Then $\exists$ a sufficiently small scalar $\lambda > 0$ such that $\mathcal{A}(\omega)x^{m-1} \geq \lambda e >0,$ for all $\omega \in \Omega$ where $e = (1, \cdots , 1)^T.$  Now choose $\alpha > 0$ with $\alpha e > -q_{\min}.$ Choosing $\bar{\alpha} = \left(\frac{\alpha}{\lambda}\right)^{\frac{1}{m-1}}>0$ we have $\Bar{\alpha} x >0.$ Also, $\mathcal{A}(\omega)(\Bar{\alpha}x)^{m-1}$=$ \frac{\alpha}{\lambda} \mathcal{A}(\omega)x^{m-1} \geq \alpha e >  - q_{\min}.$ Thus for $\Bar{\alpha} x >0$ we have $\mathcal{A}(\omega)(\Bar{\alpha}x)^{m-1} + q(\omega) \geq \mathcal{A}(\omega)(\Bar{\alpha}x)^{m-1} + q_{\min} >0.$ Therefore, $\bar{\alpha} x$ is a feasible point of SITCP$(q(\omega), \mathcal{A}(\omega), \Omega)$.

Only if part: Let $q(\omega) := \Tilde{q} < 0$ for all $\omega \in \Omega.$ Let the SITCP$(\Tilde{q},$ $\mathcal{A}(\omega), \Omega)$ be feasible. Then $\exists$ a vector $x \geq 0$ such that $\mathcal{A}(\omega)x^{m-1} + \Tilde{q} \geq 0 \implies \mathcal{A}(\omega)x^{m-1} \geq -\Tilde{q} >0$ for all $\omega \in \Omega$. Since $\mathcal{A}(\omega)x^{m-1}$ is continuous on $\Omega$ there exists a sufficiently small $\lambda > 0$ such that $x + \lambda e > 0,$ and $\mathcal{A}(\omega)(x + \lambda e)^{m-1} >0 .$
\end{proof}

The following corollary provides a necessary condition for feasibility of the solution set of SITCP$(q(\omega), \mathcal{A}(\omega), \Omega).$

\begin{corol}
Consider the SITCP$(q(\omega), \mathcal{A}(\omega), \Omega).$ Suppose $\Omega$ is compact and $\mathcal{A}(\omega)$ is continuous on $\Omega.$ If $\mathcal{A}_{\min}$ is an $S$-tensor, then SITCP$(q(\omega), \mathcal{A}(\omega), \Omega)$ is feasible for all $q(\omega) \in C(\Omega).$
\end{corol}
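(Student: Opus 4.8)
The plan is to reduce the corollary to Theorem~\ref{theorem with feasibility} by showing that the hypothesis ``$\mathcal{A}_{\min}$ is an $S$-tensor'' already forces $\mathcal{A}(\omega)$ to be a semi-infinite $S$-tensor relative to $\Omega$. First I would invoke the definition of $S$-tensor: there is a vector $x>0$ with $\mathcal{A}_{\min}x^{m-1}>0$. The key step is then the elementary monotonicity observation that, for any fixed $x\ge 0$, the assignment $\mathcal{B}\mapsto \mathcal{B}x^{m-1}$ is entrywise nondecreasing in the entries of $\mathcal{B}$: indeed $(\mathcal{B}x^{m-1})_i=\sum_{i_2,\dots,i_m} b_{ii_2\cdots i_m}x_{i_2}\cdots x_{i_m}$ is a nonnegative combination of the $b_{ii_2\cdots i_m}$, since every product $x_{i_2}\cdots x_{i_m}\ge 0$. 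Because $(a^{\prime}_{i_1\cdots i_m})=\min_{\omega\in\Omega}a_{i_1\cdots i_m}(\omega)\le a_{i_1\cdots i_m}(\omega)$ for every $\omega\in\Omega$, this yields $\mathcal{A}(\omega)x^{m-1}\ge \mathcal{A}_{\min}x^{m-1}>0$ for all $\omega\in\Omega$ \emph{with the same} $x>0$.

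Consequently $\mathcal{A}(\omega)$ meets the definition of a semi-infinite $S$-tensor with respect to $\Omega$, the witnessing vector being the $x>0$ above. Then, since $\Omega$ is compact and all entries of $\mathcal{A}(\omega)$ are continuous on $\Omega$, Theorem~\ref{theorem with feasibility} applies verbatim and gives that SITCP$(q(\omega),\mathcal{A}(\omega),\Omega)$ is feasible for every $q(\omega)\in C(\Omega)$, which is exactly the assertion.

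I do not expect a genuine obstacle here; the only points requiring a little care are (i) that the reduction needs the \emph{strict} inequality $\mathcal{A}(\omega)x^{m-1}>0$ rather than merely $\ge 0$, which is precisely what the chain $\mathcal{A}(\omega)x^{m-1}\ge \mathcal{A}_{\min}x^{m-1}>0$ delivers, and (ii) that $\mathcal{A}_{\min}$ is a well-defined tensor in $T_{m,n}$ in the first place, since the defining minima are attained (hence finite) because $\Omega$ is compact and the $a_{i_1\cdots i_m}(\cdot)$ are continuous, so the phrase ``$\mathcal{A}_{\min}$ is an $S$-tensor'' is meaningful. If one prefers a self-contained argument that does not cite Theorem~\ref{theorem with feasibility}, one can instead replay its ``if'' part directly: rescale $x$ to $\bar\alpha x$ with $\bar\alpha=(\alpha/\lambda)^{1/(m-1)}$, where $\mathcal{A}(\omega)x^{m-1}\ge\lambda e$ and $\alpha e>-q_{\min}$ (the vector $q_{\min}$ being finite by compactness of $\Omega$ and continuity of $q(\omega)$), obtaining $\mathcal{A}(\omega)(\bar\alpha x)^{m-1}+q(\omega)\ge \alpha e + q_{\min}>0$ for all $\omega\in\Omega$, so $\bar\alpha x\ge 0$ is feasible.
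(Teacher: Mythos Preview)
Your proposal is correct and follows essentially the same route as the paper: show that $\mathcal{A}_{\min}$ being an $S$-tensor forces $\mathcal{A}(\omega)x^{m-1}\ge \mathcal{A}_{\min}x^{m-1}>0$ for all $\omega\in\Omega$ (via the entrywise monotonicity of $\mathcal{B}\mapsto \mathcal{B}x^{m-1}$ for $x\ge 0$), conclude that $\mathcal{A}(\omega)$ is a semi-infinite $S$-tensor, and then invoke Theorem~\ref{theorem with feasibility}. Your extra care about well-definedness of $\mathcal{A}_{\min}$ and the strictness of the inequality is welcome but not a departure from the paper's argument.
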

\begin{proof}
Let $\mathcal{A}_{\min}$ be an $S$-tensor. Then for some $x>0$ we have $\mathcal{A}_{\min}x^{m-1} >0.$ From the definition of $\mathcal{A}_{\min}$ it follows that $\mathcal{A}(\omega)x^{m-1} > 0$ for all $\omega \in \Omega.$ This implies that $\mathcal{A}(\omega)$ is a semi-infinite $S$-tensor. Hence by Theorem \ref{theorem with feasibility}, we conclude that SITCP$(q(\omega), \mathcal{A}(\omega), \Omega)$ is feasible for all $q(\omega) \in C(\Omega).$
\end{proof}

Now we establish an $\epsilon-$error bound for the solution set of semi-infnte tensor complementarity problem. Here $\epsilon$ represents the degree of approximation of the set $S^*$

\begin{theorem}
Consider the SITCP$(q(\omega), \mathcal{A}(\omega), \Omega).$ Suppose the solution set $S^*$ is nonempty. If $\mathcal{A}(\omega_0)$ is an $P$-tensor for some $\omega_0 \in \Omega,$ then there exist $c > 0$ and $\epsilon > 0$ with $\epsilon \leq \mbox{diam} (\mbox{SOL}(q(\omega_0), \mathcal{A}(\omega_0))$ such that
\[ \mbox{dist}(x, S^*) \leq c\; r(x) + \epsilon \]
where residual function is $r_y(x) = \max_{\omega \Omega} \| \min \{ x, [\mathcal{A}(\omega)(x-y)]^{\frac{1}{m-1}} + [\mathcal{A}(\omega)y^{m-1}]^{\frac{1}{m-1}} \}\|,$ and $y\in (SOL(q(\omega_0), \mathcal{A}(\omega_0)).$ 
\end{theorem}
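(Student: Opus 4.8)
The plan is to exploit that $\omega_0 \in \Omega$ forces $S^* \subseteq \Sigma_0 := \mbox{SOL}(q(\omega_0),\mathcal{A}(\omega_0))$, and then to transfer a global error bound for the single $P$-tensor complementarity problem TCP$(q(\omega_0),\mathcal{A}(\omega_0))$ down to $S^*$, paying the (finite) gap between $S^*$ and $\Sigma_0$ as the additive constant $\epsilon$. First I would record the geometry: since $\mathcal{A}(\omega_0)$ is a $P$-tensor, the cited result \cite{bai2016global} gives that $\Sigma_0$ is nonempty and compact; because $S^* \subseteq \Sigma_0$ and $S^* \neq \phi$, the number $\epsilon := \sup_{u \in \Sigma_0} \mbox{dist}(u,S^*)$ is finite and $\epsilon \leq \mbox{diam}(\Sigma_0)$. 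This $\epsilon$ will be the additive constant. A triangle-inequality argument then shows that for every $x \in \mathbb{R}^n$, choosing a nearest point $p \in \Sigma_0$ to $x$ (which exists by compactness),
\[ \mbox{dist}(x,S^*) \leq \|x-p\| + \mbox{dist}(p,S^*) \leq \mbox{dist}(x,\Sigma_0) + \epsilon . \]

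It then remains to exhibit $c > 0$ with $\mbox{dist}(x,\Sigma_0) \leq c\, r_y(x)$ for all $x \in \mathbb{R}^n$. I would do this in two steps. The first is a comparison of residuals: since the maximum defining $r_y$ ranges over all of $\Omega$ and $\omega_0 \in \Omega$, the value $r_y(x)$ dominates its $\omega_0$-term; and because $y \in \Sigma_0$, so that $\mathcal{A}(\omega_0)y^{m-1}+q(\omega_0) \geq 0$ and $y^{T}(\mathcal{A}(\omega_0)y^{m-1}+q(\omega_0)) = 0$, one checks that the $y$-shifted map entering that term dominates, componentwise inside the $\min\{x,\cdot\}$ and up to a fixed positive factor $\gamma$, the ordinary natural map $\mathcal{A}(\omega_0)x^{m-1}+q(\omega_0)$ of TCP$(q(\omega_0),\mathcal{A}(\omega_0))$ (monotonicity of $t \mapsto t^{1/(m-1)}$ on $\mathbb{R}_+$ is used here). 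Hence $r_y(x) \geq \gamma\, \hat r_{\omega_0}(x)$ with $\hat r_{\omega_0}(x) := \|\min\{x,\, \mathcal{A}(\omega_0)x^{m-1}+q(\omega_0)\}\|$.

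The second step is the genuine error bound for the $P$-tensor problem: since $\mathcal{A}(\omega_0)$ is a $P$-tensor, $\hat r_{\omega_0}$ vanishes exactly on the compact set $\Sigma_0$ and is coercive (this is essentially why $\Sigma_0$ is bounded), so a local error bound near $\Sigma_0$ bootstraps to a global one, $\mbox{dist}(x,\Sigma_0) \leq c_0\, \hat r_{\omega_0}(x)$ for all $x$. Combining with the previous step and with the geometric estimate gives $\mbox{dist}(x,S^*) \leq c\, r_y(x) + \epsilon$ with $c := c_0/\gamma$ and $\epsilon \leq \mbox{diam}(\Sigma_0)$, which is the assertion.

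The routine parts are the geometric estimate and the bookkeeping in the residual comparison. The substantive obstacle is the last step: a truly \emph{global} error bound for a polynomial ($P$-tensor) complementarity problem. For $m = 2$ this is the classical $P$-matrix error bound of Mangasarian--Shiau; for $m \geq 3$ one must combine a local error bound at $\Sigma_0$ with the (at least linear) growth of the natural residual at infinity that the $P$-tensor property enforces. A secondary delicate point is confirming in the residual comparison that the auxiliary vector $y$ cannot shrink the residual at points $x \notin \Sigma_0$, i.e. that the $y$-shift built into $r_y$ is harmless for the lower bound — this is exactly where the complementarity relation satisfied by $y$ enters.
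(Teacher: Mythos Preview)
Your geometric reduction --- using $S^* \subseteq \Sigma_0$, the compactness of $\Sigma_0$ from the $P$-tensor hypothesis, and the triangle inequality to produce the additive $\epsilon \leq \mbox{diam}(\Sigma_0)$ --- is exactly what the paper does. The divergence is in how you handle $\mbox{dist}(x,\Sigma_0)$.

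The paper does not pass through the natural residual $\hat r_{\omega_0}(x) = \|\min\{x,\mathcal{A}(\omega_0)x^{m-1}+q(\omega_0)\}\|$ at all. It simply invokes Theorem~3.2 of \cite{zheng2019global}, which already delivers a global error bound for TCP$(q(\omega_0),\mathcal{A}(\omega_0))$ \emph{directly in terms of the $y$-shifted residual} $\|\min\{x,[\mathcal{A}(\omega_0)(x-y)]^{1/(m-1)}+[\mathcal{A}(\omega_0)y^{m-1}]^{1/(m-1)}\}\|$ for $y \in \Sigma_0$. One then bounds this $\omega_0$-term by the maximum over $\omega \in \Omega$ and is done. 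No comparison constant $\gamma$, no bootstrapping from local to global, is needed.

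Your detour through $\hat r_{\omega_0}$ introduces two steps that you yourself flag as delicate, and neither is actually justified in the proposal. First, the claimed inequality $r_y(x) \geq \gamma\,\hat r_{\omega_0}(x)$ with a \emph{uniform} $\gamma > 0$ is not established: note that $q(\omega_0)$ is absent from the $y$-shifted expression, and the relation between $[\mathcal{A}(\omega_0)(x-y)]^{1/(m-1)}+[\mathcal{A}(\omega_0)y^{m-1}]^{1/(m-1)}$ and $\mathcal{A}(\omega_0)x^{m-1}+q(\omega_0)$ is not a simple monotonicity argument (the tensor map is not additive in $x$, and the $(m-1)$-th roots do not commute with it). Second, upgrading a local error bound for $\hat r_{\omega_0}$ to a global one for $m \geq 3$ is, as you say, a substantive result in its own right --- and it is precisely what the cited theorem of Zheng et al.\ supplies, in the form actually required. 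So the architecture of your argument is right, but the work you propose for the multiplicative bound is both harder than necessary and left incomplete; the paper sidesteps all of it by citation.
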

\begin{proof}
Since $\mathcal{A}(\omega_0)$ is an $P$-tensor, SOL$(q(\omega_0), \mathcal{A}(\omega_0))$ is bounded. This implies $\exists$ an $\epsilon > 0$ such that SOL$(q(\omega_0), \mathcal{A}(\omega_0)) \subseteq S^* + \epsilon \mathbb{B}.$ Consequently,
\begin{equation}\label{error bound 1.1}
   \mbox{dist}(x, S^*) \leq \mbox{dist}(x, SOL(q(\omega_0), \mathcal{A}(\omega_0))) + \epsilon, \forall x \in \mathbb{R}^n.
\end{equation}
Notice that $S^* \subseteq $ SOL$(q(\omega_0), \mathcal{A}(\omega_0)).$ Therefore the diameter of the set SOL$(q(\omega_0), \mathcal{A}(\omega_0))$ is an upper bound of $\epsilon.$ By Theorem 3.2 of \cite{zheng2019global} for the tensor complementarity problem TCP$(q(\omega_0), \mathcal{A}(\omega_0)),$ there exists $c > 0$ such that $\forall\; x \in \mathbb{R}^n,$
\begin{equation}\label{error bound 1.2}
    \mbox{dist}(x, SOL(q(\omega_0), \mathcal{A}(\omega_0))) \leq c \| \min \{ x, [\mathcal{A}(\omega_0)(x-y)]^{\frac{1}{m-1}} + [\mathcal{A}(\omega_0)y^{m-1}]^{\frac{1}{m-1}} \}\|.
\end{equation}
From (\ref{error bound 1.1}) and (\ref{error bound 1.2}) the desired result follows.
\end{proof}

For the next result we define semi-infinite $R_0$-tensor and establish a connection between $R_0$-tensor and semi-infinite $R_0$-tensor.

\begin{defn}
     The tensor $\mathcal{A}(\omega)$ is said to be a semi-infinite $R_0$-tensor relative to a set $\Omega$ if the SITCP$(0,\mathcal{A}(\omega), \Omega)$ has zero as its unique solution, i.e.,
     \begin{equation}
         x \geq 0,\;\; \mathcal{A}(\omega)x \geq 0,\;\;  x^T \mathcal{A}(\omega) x = 0,\;\;\;\; \forall \omega \in \Omega \implies x = 0. 
     \end{equation}
\end{defn}

\begin{theorem}
    Consider the SITCP$(q(\omega), \mathcal{A}(\omega), \Omega).$ If $\mathcal{A}(\omega_0)$ is an $R_0$-tensor for some $\omega_0 \in \Omega.$ Then $\mathcal{A}(\omega)$ is a semi-infinite $R_0$-tensor relative to $\Omega.$
\end{theorem}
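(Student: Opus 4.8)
The plan is to exploit the obvious fact that imposing complementarity for \emph{all} $\omega\in\Omega$ is at least as restrictive as imposing it for the single parameter value $\omega_0$. Concretely, I would first show the set inclusion
\[
\mathrm{SOL}(0,\mathcal{A}(\omega),\Omega)\ \subseteq\ \mathrm{SOL}(0,\mathcal{A}(\omega_0)),
\]
which is immediate from the definition of $S^*$: if $x\geq 0$ satisfies $\mathcal{A}(\omega)x^{m-1}\geq 0$ and $x^{T}(\mathcal{A}(\omega)x^{m-1})=0$ for every $\omega\in\Omega$, then in particular these hold at $\omega=\omega_0$, so $x$ solves $\mathrm{TCP}(0,\mathcal{A}(\omega_0))$.

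Next I would invoke the hypothesis that $\mathcal{A}(\omega_0)$ is an $R_0$-tensor. By the definition of an $R_0$-tensor (Definition in Section 2), $\mathrm{TCP}(0,\mathcal{A}(\omega_0))$ has the zero vector as its unique solution, i.e. $\mathrm{SOL}(0,\mathcal{A}(\omega_0))=\{0\}$. Combining with the inclusion above gives $\mathrm{SOL}(0,\mathcal{A}(\omega),\Omega)\subseteq\{0\}$; since $x=0$ trivially satisfies the defining conditions of $\mathrm{SITCP}(0,\mathcal{A}(\omega),\Omega)$, we get $\mathrm{SOL}(0,\mathcal{A}(\omega),\Omega)=\{0\}$. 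That is precisely the statement that $\mathcal{A}(\omega)$ is a semi-infinite $R_0$-tensor relative to $\Omega$.

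There is essentially no analytical obstacle here; the only point worth checking carefully is that the conditions in the definition of semi-infinite $R_0$-tensor (namely $x\ge 0$, $\mathcal{A}(\omega)x^{m-1}\ge 0$, $x^{T}\mathcal{A}(\omega)x^{m-1}=0$ for all $\omega$) do reduce, on restriction to $\omega_0$, to exactly the system defining $\mathrm{TCP}(0,\mathcal{A}(\omega_0))$ — which they do verbatim. It is also worth remarking that, unlike the earlier theorems in this section, no compactness of $\Omega$ or continuity of $\mathcal{A}(\cdot)$ is needed: the argument is purely set-theoretic and uses only that $\omega_0\in\Omega$.
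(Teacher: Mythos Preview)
Your proposal is correct and follows essentially the same approach as the paper: both argue that $\mathrm{SOL}(0,\mathcal{A}(\omega),\Omega)\subseteq \mathrm{SOL}(0,\mathcal{A}(\omega_0))=\{0\}$ and conclude. Your write-up is in fact more careful than the paper's, since you explicitly check that $0$ belongs to the left-hand set and note that no topological assumptions on $\Omega$ or $\mathcal{A}(\cdot)$ are needed.
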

\begin{proof}
     Since $\mathcal{A}(\omega_0)$ is an $R_0$-tensor, we have SOL$(0,\mathcal{A}(\omega_0))= \{0\}.$ Also, $S^* \subseteq \cap_{\omega \in \Omega}$ SOL$(0,\mathcal{A}(\omega), \Omega)$ Therefore $S^* = \{ 0\}.$ Hence the result.
\end{proof}

Here we prove the necessary and sufficient conditions for the error bound of solution set to be level bounded in terms of residual function. To solve the semi-infinite tensor complementarity problem is equivalent to finding $x\in \mathbb{R}^n$ such that $x\in \mbox{SOL}(q(\omega), \mathcal{A}(\omega))$ for all $\omega \in \Omega.$ However, in many cases, it is possible to obtain $x \in \mbox{SOL}(q(\omega), \mathcal{A}(\omega))$ for some $\omega.$ In this case, it is important to provide a quantitative measure of the closeness of each $x\in \mathbb{R}^n$ to each individual set SOL$(q(\omega), \mathcal{A}(\omega))$ in terms of some residual functions $r(x)$. In other words, we find $c > 0$ such that
\begin{equation*}
    \mbox{dist}(x, \mbox{SOL}(q(\omega), \mathcal{A}(\omega))) \leq c r(x), \; \forall \; \omega\in \Omega, \; \forall\; x\in \mathbb{R}^n
\end{equation*}
which is equivalent to
\begin{equation*}
    \max_{\omega \in \Omega} \mbox{dist}(x, \mbox{SOL}(q(\omega), \mathcal{A}(\omega))) \leq c r(x), \;\; \forall \; x\in \mathbb{R}^n.
\end{equation*}
Here $c$ is said to be weak error bound. The importance of weak error bound is that the solution $S^*$ is need not be nonempty as required in case of error bound.

\begin{theorem}
Consider the SITCP$(q(\omega), \mathcal{A}(\omega), \Omega).$ Suppose $\Omega$ is compact and $\mathcal{A}(\omega)$ and $q(\omega)$ are continuous. Then the residual function, $r(x) = \max_{\omega \in \Omega} \| \min(x, \mathcal{A}(\omega)x + q(\omega))\|^2$ is level bounded
if and only if the tensor $\mathcal{A}(\omega)$ is a semi-infinite $R_0$-tensor relative to $\Omega.$
\end{theorem}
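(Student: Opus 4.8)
The plan is to prove the two implications separately, in each case arguing by contradiction (for the converse, via the contrapositive) and exploiting the homogeneity of $x \mapsto \mathcal{A}(\omega)x^{m-1}$ of degree $m-1$ under scaling. First I would record a few elementary facts used throughout: if $v \in \mathbb{R}^n$ satisfies $\|v\|^2 \le \beta$, then $|v_i| \le \sqrt{\beta}$ for every $i$; consequently $|\min(a,b)_i| \le \sqrt\beta$ forces both $a_i \ge -\sqrt\beta$ and $b_i \ge -\sqrt\beta$, and if moreover $a_i > \sqrt\beta$ then $\min(a,b)_i = b_i$. I would also note that, since $\Omega$ is compact and $\mathcal{A}(\cdot), q(\cdot)$ are continuous, for each fixed $x$ the function $\omega \mapsto \|\min(x, \mathcal{A}(\omega)x^{m-1} + q(\omega))\|^2$ is continuous, so the maximum defining $r(x)$ is attained, and the constants $c_i := \max_{\omega \in \Omega} |q_i(\omega)|$ are finite; these are the only places compactness and continuity really enter.

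For the implication ``$\mathcal{A}(\omega)$ semi-infinite $R_0$ $\Rightarrow$ $r$ level bounded'', I would assume the contrary: there exist $\alpha \ge 0$ and a sequence $\{x^k\}$ with $r(x^k) \le \alpha$ and $\|x^k\| \to \infty$. Passing to a subsequence, $y^k := x^k/\|x^k\| \to y$ with $\|y\| = 1$. From $r(x^k) \le \alpha$ we get, for every $\omega \in \Omega$ and every $i$, that $x^k_i \ge -\sqrt\alpha$ and $(\mathcal{A}(\omega)(x^k)^{m-1})_i + q_i(\omega) \ge -\sqrt\alpha$. Dividing the first relation by $\|x^k\|$, and the second by $\|x^k\|^{m-1}$ (using $\mathcal{A}(\omega)(x^k)^{m-1} = \|x^k\|^{m-1}\mathcal{A}(\omega)(y^k)^{m-1}$ and $m \ge 2$), and letting $k \to \infty$, yields $y \ge 0$ and $\mathcal{A}(\omega)y^{m-1} \ge 0$ for all $\omega$. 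For the complementarity condition, fix $\omega$ and $i$: if $y_i > 0$ then $x^k_i \to \infty$, so eventually $x^k_i > \sqrt\alpha$ and hence $|(\mathcal{A}(\omega)(x^k)^{m-1})_i + q_i(\omega)| \le \sqrt\alpha$; dividing by $\|x^k\|^{m-1}$ and passing to the limit gives $(\mathcal{A}(\omega)y^{m-1})_i = 0$, so $y_i(\mathcal{A}(\omega)y^{m-1})_i = 0$; if $y_i = 0$ this product vanishes trivially. Summing over $i$ gives $y^T\mathcal{A}(\omega)y^{m-1} = 0$ for every $\omega \in \Omega$, so $y \ne 0$ solves SITCP$(0, \mathcal{A}(\omega), \Omega)$, contradicting the semi-infinite $R_0$ property.

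For the converse I would prove the contrapositive: assuming $\mathcal{A}(\omega)$ is not semi-infinite $R_0$, choose $y \ne 0$ with $y \ge 0$, $\mathcal{A}(\omega)y^{m-1} \ge 0$, and $y^T\mathcal{A}(\omega)y^{m-1} = 0$ for all $\omega$; by complementarity $y_i(\mathcal{A}(\omega)y^{m-1})_i = 0$ for every $i$ and $\omega$. I would then evaluate $r$ along the ray $\{ty : t > 0\}$. In coordinate $i$ we have $\min\bigl((ty)_i,\, t^{m-1}(\mathcal{A}(\omega)y^{m-1})_i + q_i(\omega)\bigr)$, which is bounded in absolute value by $c_i$, uniformly in $\omega$ and for all sufficiently large $t$: if $y_i > 0$ then $(\mathcal{A}(\omega)y^{m-1})_i = 0$ and $(ty)_i \to \infty$, so the minimum equals $q_i(\omega)$ once $ty_i > \max_\omega q_i(\omega)$; if $y_i = 0$ the first argument is $0$ and the second is $\ge q_i(\omega) \ge -c_i$, so the minimum lies in $[-c_i, 0]$. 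Hence $r(ty) \le \sum_i c_i^2$ for all large $t$ while $\|ty\| = t\|y\| \to \infty$, so this level set is unbounded and $r$ is not level bounded.

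The main obstacle is the first implication, specifically the verification that the normalized limit $y$ satisfies $y^T\mathcal{A}(\omega)y^{m-1} = 0$ simultaneously for all $\omega \in \Omega$: one must run the coordinatewise ``which term attains the minimum'' dichotomy carefully along the fixed subsequence, and keep the two different homogeneity degrees straight (dividing the iterate by $\|x^k\|$ but the tensor term by $\|x^k\|^{m-1}$). Everything else — the elementary properties of $\min$, the finiteness of the $c_i$, and the ray computation in the converse — is routine once these bookkeeping points are handled.
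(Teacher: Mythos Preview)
Your proposal is correct and follows essentially the same strategy as the paper: for each direction you argue by contradiction/contrapositive, in one case normalizing an unbounded sequence with bounded residual to produce a nonzero solution of SITCP$(0,\mathcal{A}(\omega),\Omega)$, and in the other evaluating $r$ along the ray $\{ty\}$ generated by such a nonzero solution to exhibit an unbounded level set. Your componentwise bookkeeping---normalizing the iterate by $\|x^k\|$ but the tensor term by $\|x^k\|^{m-1}$---is in fact cleaner than the paper's, which writes $x_n/\|x_n\|^{m-1} \to x_0$ with $\|x_0\|=1$ (problematic for $m>2$), but the underlying argument is the same.
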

\begin{proof}
Only if part: We prove the first result by contrapositive method. By this approach we first assume that $r(x)$ is not level bounded. Then $\exists$ a sequence $\{x_n\} \mapsto \infty$ as $n \mapsto \infty,$ $\{r(x_n)\}$ is bounded. We assume that $\frac{x_n}{\|x_n\|^{m-1}}$ converge to the limit $x_0$ with $\|x_0\| = 1.$ Taking into account the continuity of $q(\omega)$ and $\mathcal{A}(\omega)$ and the compactness of $\Omega$, we see that $r(x)$ is continuous and $q(\omega)$ is bounded on $\Omega.$ Hence, $\lim_{n\mapsto \infty} \frac{r(x_n)}{\|x_n\|^{m-1}}=0$ and $\lim_{n\mapsto \infty} \frac{q(\omega)}{\|x_n\|^{m-1}}=0$ for all $\omega \in \Omega.$ Now
\begin{equation}\label{equation for weak error bound  1st part}
    \frac{r(x_n)}{\|x_n\|^{2(m-1)}} = \max_{\omega \in \Omega} \| \min\left\{ \frac{x_n}{\|x_n\|^{m-1}}, \frac{\mathcal{A}(\omega)x + q(\omega)}{\|x_n\|^{m-1}} \right\} \|^2.
\end{equation}
Taking limit of both sides of the equation (\ref{equation for weak error bound  1st part}) as $n\mapsto \infty$ we obtain,
\[ \max_{\omega \in \Omega} \| \min\left\{x_0, \mathcal{A}(\omega)x_0^{m-1} \right\} \|^2 =0. \]
This means that a nonzero vector $x_0$ is a solution of SITCP$(0,\mathcal{A}(\omega), \Omega).$ Hence $\mathcal{A}(\omega)$ is not a semi-infinite $R_0$-tensor. This completes the proof.

If part: Suppose on the contrary that the SITCP$(0,\mathcal{A}(\omega), \Omega)$ has a nonzero vector $x$ as a solution. Let $I(x) = \{i: x_i = 0\}$ and $J(x) = \{i: x_i > 0\}.$ The compactness of $\Omega$ and the continuity of $q$ ensures that $q(\omega)$ is bounded on $\Omega.$ Thus there exists a scalar $K > 0$ such that, for any $k\geq K,$
\begin{equation}\label{last theorem 0}
    kx_i \geq q_i(\omega) \mbox{ for all } \omega \in \Omega \mbox{ and } i\in J(x).
\end{equation}
Given any $k \geq K,$ we have
\begin{align}\label{last theorem 1}
    r(kx) & = \max_{\omega \in \Omega} \| \min (kx, k^{m-1}\mathcal{A}(\omega)x^{m-1} + q(\omega))\|^2\\ \notag
          & \leq \sum_{i=1}^n \max_{\omega \in \Omega} \{ \min (k x_i, (k^{m-1} \mathcal{A}(\omega) x^{m-1})_i + q_i(\omega))\}^2
\end{align}
Now we consider the following cases.

\NI Case-1. We consider the case when $i \in J(x).$ Then $((\mathcal{A}(\omega)x^{m-1})_i = 0.$ It follows from (\ref{last theorem 0}) that
\begin{equation}\label{last theorem 2}
    \max_{\omega \in \Omega} \{ \min \{k x_i, k^{m-1} (\mathcal{A}(\omega)x^{m-1})_i + q_i(\omega))\}\}^2 = \max_{\omega \in \Omega}q_i(\omega)^2.
\end{equation}

\NI Case-2. We consider the case when $i \in I(x).$ If $ k^{m-1} (\mathcal{A}(\omega)x^{m-1})_i + q_i(\omega) \geq 0,$ we have
\begin{equation}\label{subcase 1}
    \{\min(k x_i, k^{m-1}(\mathcal{A}(\omega)x^{m-1})_i + q_i(\omega))\}^2 = 0.
\end{equation}
If $k^{m-1} (\mathcal{A}(\omega)x^{m-1})_i + q_i(\omega) < 0,$ then by the fact $q_i(\omega) \leq k^{m-1}(\mathcal{A}(\omega)x^{m-1})_i + q_i(\omega) < 0$ we obtain 
\begin{equation}\label{subcase 2}
    \{\min(k x_i, k^{m-1}(\mathcal{A}(\omega)x^{m-1})_i + q_i(\omega))\}^2 \leq q_i(\omega)^2.
\end{equation}
Thus combining (\ref{subcase 1}) and (\ref{subcase 2}) we have
\begin{equation}\label{last theorem 3}
    \max_{\omega \in \Omega} [\min(k x_i, k^{m-1} (\mathcal{A}(\omega)x^{m-1})_i + q_i(\omega))]^2 \leq \max_{\omega \in \Omega} q_i(\omega)^2.
\end{equation}
Putting the facts (\ref{last theorem 1}), (\ref{last theorem 2}) and (\ref{last theorem 3}) together, it follows that
\[ r(kx) \leq \sum_{i=1}^n \max_{\omega \in \Omega} q_i(\omega)^2 < \infty \]
for all $k \geq K.$ This contradicts the level boundedness of $r(x).$
\end{proof}

\section{Conclusion}
In this paper we introduce semi-infinite tensor complementarity problem to accommodate more realistic situation of the problem. We show that the solution set of semi-infinite tensor complementarity problem exists with some assumption. An example is illustrated in detail to establish the result. We establish a connection between semi-infinite tensor and its equivalent tensors to obtain the solution of semi-infinite tensor complementarity problem. Finally, we show that the error bound of the solution set is level bounded in terms of residual function.

\section{Acknowledgment}
The author R. Deb is grateful to the Council of Scientific $\&$ Industrial Research (CSIR), India, for providing financial assistance through the Junior Research Fellowship program.

\bibliographystyle{plain}
\bibliography{referencesall}

\end{document}